\newtheorem{thm}{Theorem}
\newtheorem{defn}{Definition}
\newtheorem{lemma}{Lemma}
\newtheorem{rk}{Remark}
\numberwithin{equation}{section} \setcounter{tocdepth}{1}
\def\w{\omega}
\def\a{\alpha}
\def\w{\omega}
\def\b{\beta}
\begin{document}

\title[Non-volterra quadratic stochastic operator]{A quasi-strictly non-volterra quadratic stochastic operator}

\author{A.J.M. Hardin, U.A. Rozikov}

 \address{A.\ J.\ M. \ Hardin \\ University of Oklahoma,
660 Parrington Oval, Norman, Oklahoma 73019, USA.} \email
{ahardin@ou.edu}

 \address{U.\ A.\ Rozikov \\ Institute of mathematics,
81, Mirzo Ulug'bek str., 100125, Tashkent, Uzbekistan.} \email
{rozikovu@yandex.ru}

\begin{abstract} We consider a four-parameter family of non-Volterra operators defined
on the two-dimensional simplex and show that, with one exception, each such
operator has a unique fixed point.
Depending on the parameters, we establish the type of this fixed point.
We study the set of limit points for each trajectory and
show that this set can be a single point or can contain a 2-periodic trajectory.
\end{abstract}

\maketitle
{\bf Mathematics Subject Classification(2000):} Primary 37N25, Secondary 92D10.

\vskip 0.3 truecm

{\bf Key words.} Quadratic stochastic operator, simplex,
trajectory, Volterra and non-Volterra operators.

\vskip 0.3 truecm

\section{Introduction}

\emph{Quadratic stochastic operators} (QSOs) frequently arise
in many models of mathematical genetics, namely, in the theory of heredity
(see e.g. \cite{Lyu1} and \cite{GMR} for motivations and
results related to QSOs). Here we shall investigate a family
of QSOs defined on the two-dimensional simplex. Let us give some definitions first.
The $(m-1)$-dimensional simplex is defined by
\begin{equation} \label{simplex}
S^{m-1} =\left\{x = (x_1, x_2,\dots,x_m)\in R^m : \text{ for any } i \ x_i \geq 0, \text { and } \sum_{i=1}^m x_i=1\right\}.
\end{equation}

A QSO is the mapping $V:S^{m-1}\rightarrow S^{m-1}$ with
 \begin{equation} \label{gqso1}
(V x)_k = \sum_{i,j=1}^m P_{ij,k}x_ix_j
\end{equation}
 where
  \begin{equation} \label{gqsokoef} P_{ij,k}\geq 0, \ \ P_{ij,k} = P_{ji,k},
  \ \  \sum_{k=1}^m P_{ij,k}=1  \ \ \text {for all} \  i, j, k.
\end{equation}

For a given $x^{(0)}\in S^{m-1}$ the trajectory
$x^{(n)}$ of $x^{(0)}$ under the action of the QSO \eqref{gqso1} is
defined by $x^{(n+1)} = V (x^{(n)})$,  where $n = 0, 1, 2,\dots$.

One of the main problems in mathematical biology consists in the study of
the asymptotic behavior of these trajectories.

Denote by $\omega (x^{0})$ the set of $\omega-$limiting points of trajectory $x^{(n)}$.
 Since $S^{m-1}$ is a compact set and $\{x^{(n)}\}\subset S^{m-1}$ it follows that $\omega (x^{0})\neq\emptyset$.
It is clear that if $\omega (x^{0})$ consists of a single point, then the trajectory converges and $\omega (x^{0})$
is a fixed point of (\ref{gqso1}). The limit behavior of the trajectories of any QSO on one-dimensional
space was fully studied by Yu.I. Lyubich \cite{Lyu2}. However, the problem is still open even in the two-dimensional simplex.

\begin{defn}\cite{RJ2}\label{dsnvqso} A quadratic stochastic operator 
  is called Volterra if
\begin{equation} \label{eqv4}
       p_{ij,k}=0, \ \ \mbox {for} \ \ k\notin \{i,j\},  \  \ i,j,k=1,...,m,
\end{equation}
and it is called strictly non-Volterra if
\begin{equation} \label{eqv5}
       p_{ij,k}=0, \ \ \mbox {for} \ \ k\in \{i,j\},  \  \ i,j,k=1,...,m
\end{equation}
\end{defn}

In \cite{GJM2, RJ3} a Volterra operator of a bisexual population was investigated.
However, in the non-Volterra case, many
questions remain open and there seems to be no general theory available \cite{BlJaSc, JM,RJ1,RJ2, RZ, RZ1}.
In \cite{RJ2} the conception of strictly non-Volterra QSOs was introduced, and it was proved that
an arbitrary strictly non-Volterra quadratic stochastic operator on the
two-dimensional simplex has a unique fixed point, which is not attracting.

\begin{rk}
 A strictly non-Volterra operator exists only if $m\geq3$. In \cite{RJ2} the case $m=3$ was studied.
 In the present paper we will study the dynamics of the case $m=3$ for a non-Volterra QSO which has a strictly non-Volterra $x_1^\prime$ and $x_2^\prime$.
\end{rk}

In this paper we consider a non-Volterra QSO (which we call quasi-strictly non-Volterra)
defined on the two-dimensional simplex which has the form
\begin{equation} \label{eqv6}
V:\left\{\begin{array}{llll}
x^\prime_1= \a x^2_2 + cx^2_3 + 2x_2x_3,\\[4mm]
x^\prime_2=ax^2_1 + dx^2_3 + 2x_1x_3,\\[4mm]
x^\prime_3=bx^2_1 + \b x^2_2 + ex^2_3 + 2x_1x_2,
\end{array}\right.
\end{equation}
where   $\a, \b, a, b, c, d, e \geq 0$  and
\begin{equation} \label{eqv7}
      \begin{array}{ll} a+b=1, \hspace{2mm}\a+\b=1, \hspace{2mm}c+d+e=1.
\end{array}
\end{equation}

The paper is organized as follows. In Section 2 we study fixed points of (\ref{eqv6}),
where we show that for $e\ne 1$, the QSO (\ref{eqv6}) has a unique fixed point.
In the case $e=1$, we show that the operator has two fixed points. In Section 3 we find conditions on parameters
under which a fixed point is a repelling, attracting, or saddle point. In the last section we  describe the $\omega$-limit set of this non-Volterra QSO on $S^2$.

\section{Fixed point of operator}

\begin{defn} A point $x\in S^{m-1}$ is called a fixed point of a QSO $V$ if $V(x)=x$, i.e. it satisfies
\begin{equation} \label{eqv8}
\left\{\begin{array}{llll}
x_1= \a x^2_2 + cx^2_3 + 2x_2x_3,\\[4mm]
x_2=ax^2_1 + dx^2_3 + 2x_1x_3,\\[4mm]
x_3=bx^2_1 + \b x^2_2 + ex^2_3 + 2x_1x_2.
\end{array}\right.
\end{equation}
\end{defn}

\begin{thm} \label{Th1}
The non-Volterra QSO (\ref{eqv6}) has a unique fixed point $x^*=(x^*_1,x^*_2,x^*_3 )\in S^2$ in all cases, except when $e=1$. In the case $e=1$, there are two fixed points of the system, one of which is $(0,0,1)$.
\end{thm}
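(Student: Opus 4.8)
The plan is to split the fixed-point equations \eqref{eqv8} into a boundary part and an interior part, treat the boundary by a short positivity argument, and reduce the interior problem to counting the admissible roots of a single-variable polynomial. The first thing to record is the only explicit candidate: substituting $x=(0,0,1)$ into \eqref{eqv8} gives $x_1=c,\ x_2=d,\ x_3=e$, so $(0,0,1)$ is fixed exactly when $c=d=0$; since $c+d+e=1$ with $c,d\ge 0$, this is equivalent to $e=1$. This already isolates the exceptional value and identifies the second fixed point in that case.

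Next I would show that every fixed point other than $(0,0,1)$ lies in the open simplex. On the edge where $x_i=0$, the $i$-th equation of \eqref{eqv8} forces a sum of nonnegative terms to vanish; the genuine cross term $2x_jx_k$ in that equation then forces $x_jx_k=0$, so a second coordinate must also vanish and we are reduced to a vertex. Checking the three vertices directly shows that none is fixed except $(0,0,1)$ (and only when $e=1$). Hence, for every admissible choice of parameters, all fixed points with $x\ne(0,0,1)$ satisfy $x_1,x_2,x_3>0$. Existence of at least one fixed point comes for free from Brouwer's theorem, since $V$ is a continuous self-map of the compact convex set $S^2$, so the whole statement reduces to the claim that there is exactly one fixed point in the interior, for all parameters.

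For the interior count I would use the relation $x_1+x_2+x_3=1$ to discard the third equation of \eqref{eqv8} (it is implied by the first two together with the constraint, because the three right-hand sides always sum to $(x_1+x_2+x_3)^2=1$) and substitute $x_3=1-x_1-x_2$ into the first two. Writing $s=x_1+x_2$ and $x=x_1$, these become two quadratics in $x$, namely $\alpha x^2-(3-2\beta s)x+[\alpha s^2+c(1-s)^2+2s(1-s)]=0$ and $a x^2+(3-2s)x+[d(1-s)^2-s]=0$, whose coefficients are polynomials in $s$. Eliminating $x$ via the resultant of these two quadratics yields a single quartic $R(s)=0$, and I would then show that exactly one root $s^*\in(0,1)$ admits a companion value $0<x<s^*$, i.e. an interior point of $S^2$, the remaining roots being non-real or violating $0<x<s$. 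A direct evaluation gives $R(0)=(\alpha d-ac)^2+9(\alpha+a)(c+d)$, which vanishes when $e=1$ (forcing $c=d=0$): this root $s=0$ is precisely the algebraic trace of the boundary solution $(0,0,1)$, and peeling it off is exactly what separates the two cases.

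The main obstacle is the uniform root-counting for $R(s)$ across the entire four-parameter family \eqref{eqv7}. I expect to establish that $R$ has a single admissible root by combining sign information at the endpoints $s=0$ and $s=1$ with monotonicity and convexity estimates that exploit $0\le \alpha,\beta,a,b,c,d,e\le 1$, while carefully handling the feasibility window $0\le x\le s\le 1$ and the degenerate coefficient values (for instance $\alpha=0$ or $a=0$, where one quadratic collapses to a linear equation, or $\alpha=a=0$, where $R(0)$ vanishes but the associated common root is infeasible). I anticipate that the bookkeeping of these degenerate subcases, rather than any single conceptual step, will be the most delicate part of the argument.
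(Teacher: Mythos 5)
Your setup is sound as far as it goes, and it is a genuinely different route from the paper's: the check that $(0,0,1)$ is fixed precisely when $c=d=0$ (equivalently $e=1$), the argument that a boundary fixed point must be a vertex (the cross term $2x_jx_k$ forces a second coordinate to vanish) and that no other vertex is ever fixed, the observation that the third equation of \eqref{eqv8} follows from the first two together with $x_1+x_2+x_3=1$, and the two quadratics in $x=x_1$ with coefficients in $s=x_1+x_2$ are all correct (I verified the coefficients $3-2\beta s$ and $3-2s$, and also $R(0)=(\alpha d-ac)^2+9(\alpha+a)(c+d)$). The paper instead solves the first two equations of \eqref{eqv8} for $x_2$ and $x_1$ as explicit square-root functions of $x_3$, substitutes into $x_1+x_2+x_3=1$, and counts intersections of a single function $f$ with the diagonal, splitting into cases according to whether $\alpha$ and $a$ vanish; your elimination-by-resultant keeps everything polynomial, which is a real structural advantage.

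The genuine gap is that the heart of the theorem is never proved: you do not show that the quartic $R(s)$ has exactly one admissible root. The phrases ``I expect to establish'' and ``I anticipate'' are a plan, not an argument, and what they defer is precisely the uniqueness assertion that constitutes the theorem; everything you actually carry out (boundary classification, Brouwer existence, the elimination setup) is the routine part. Nor is the deferred step routine: $R$ is a quartic in $s$ whose coefficients involve the full parameter family, it is not convex, and ``endpoint signs plus monotonicity and convexity estimates'' is not yet a checkable mechanism. Even granting a root count, two further points need care: (i) admissibility is not just $s^*\in(0,1)$ --- the resultant only certifies a common root over $\mathbb{C}$, you must track which root of which quadratic realizes it and that $0<x<s^*$, and two distinct fixed points sharing the same $s$ would collapse to a single root of $R$ (this can be excluded by noting the quadratics cannot be proportional, since $-(3-2\beta s)/(3-2s)<0\le\alpha/a$ on $[0,1]$, but you need to say so); (ii) when $e=1$, Brouwer may return only $(0,0,1)$, so the existence of the second, interior fixed point must also come out of the analysis of $R$ rather than ``for free.'' By contrast, the paper does execute its analogue of this step: it proves $f'\ge 0$, $f''\ge 0$, $f(0)>0$, and $f(1)\le 1$ with equality iff $e=1$, which pins down the number of crossings, and it handles the degenerate cases $\alpha=0$, $a=0$, $\alpha=a=0$ separately. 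Until you supply the corresponding analysis of $R(s)$, including those degenerate subcases, the proposal does not prove the statement.
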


\begin{proof}  We shall consider all possible cases on $a$ and $\a$.

1) Let $\a \neq 0, a \neq 0$. Substituting $x_1 = 1 - x_2 - x_3$ into the first equation of (\ref{eqv8}) gives
\begin{equation} \label{eqv9}
x_2 = \frac{-2x_3-1+\sqrt{4(1- \a c) x^2_3 + 4(1 - \a)x_3 + 1 + 4\a}}{2\a} \geq 0,
\end{equation}
where $x_3 \in [0, \frac{\sqrt{1 + 4c} -1}{2c}]$ if $c \neq 0$ and $x_3 \in [0, 1]$  if $c = 0$.
Similarly, the second equation in (\ref{eqv8}) gives
\begin{equation} \label{eqv10}
x_1 = \frac{-2x_3-1+\sqrt{4(1- ad) x^2_3 + 4(1 - a)x_3 + 1 + 4a}}{2a} \geq 0,
\end{equation}
where $x_3 \in [0, \frac{\sqrt{1 + 4d} -1}{2d}]$ if $d \neq 0$ and $x_3 \in [0, 1]$  if $d = 0$.

Now we may substitute (\ref{eqv9}) and (\ref{eqv10}) into $1 = x_1 + x_2 + x_3$, allowing $x = x_3$ and $f(x) = x$. This gives the function
\begin{align*}
f(x) = &\frac{\a\sqrt{4(1-ad)x^2+4(1-a)x+1+4a}}{2(\a+a-\a a)} \\
 &+\frac{a\sqrt{4(1-\a c)x^2+4(1-\a)x+1+4\a}-\a-a-2\a a}{2(\a+a-\a a)}.
\end{align*}

Now we define
\[\begin{array}{llll}
g(x) = 4(1-ad)x^2+4(1-a)x+1+4a \geq 0,\\
h(x) = 4(1-\a c)x^2+4(1-\a)x+1+4\a \geq 0.
\end{array}\]
Thus, we have
\[\begin{array}{llll}
g^\prime(x) = 8(1-ad)x+4(1-a) \geq 0, & h^\prime(x) = 8(1-\a c)x+4(1-\a) \geq 0,\\
g^{\prime\prime}(x) = 8(1-ad) \geq 0,& h^{\prime\prime}(x) = 8(1-\a c)x \geq 0.
\end{array}\]
Then,
\[
f(x) = \frac{\a\sqrt{g(x)}}{2(\a+a-\a a)} +\frac{a\sqrt{h(x)}-\a-a-2\a a}{2(\a+a-\a a)}.
\]
Differentiating $f(x)$ gives
\[
f^\prime(x) =  \frac{\a g^\prime(x)}{4(\a+a-\a a)\sqrt{g(x)}}
+ \frac{ah^\prime(x)}{4(\a+a-\a a)\sqrt{h(x)}} \geq 0,
\]

\begin{align*}
f^{\prime\prime}(x) = & \frac{\a}{4(\a+a-\a a)}
\frac{g(x)g^{\prime\prime}(x) - \frac{1}{2}g^\prime(x)^2}{\sqrt{g(x)^3}} \\
& + \frac{a}{4(\a+a-\a a)}
\frac{h(x)h^{\prime\prime}(x) - \frac{1}{2}h^\prime(x)^2}{\sqrt{h(x)^3}} \geq 0.
\end{align*}

These inequalities follow from (\ref{eqv7}) as well as the fact that $\a + a - \a a >0$. Additionally, substituting the values for $g(x)$, $h(x)$, and their derivatives into the inequalities
$$ \begin{array}{llll}
g(x)g^{\prime\prime}(x) - \frac{1}{2}g^\prime(x)^2 \geq 0, & h(x)h^{\prime\prime}(x) - \frac{1}{2}h^\prime(x)^2 \geq 0,
\end{array} $$
gives
$$ \begin{array}{llll}
(4(1-ad)x^2+4(1-a)x+1+4a)(1-ad) - (2(1-ad)x+(1-a))^2 \geq 0, \\
(4(1-\a c)x^2+4(1-\a)x+1+4\a)(1-\a c) - (2(1-\a c)x+(1-\a))^2 \geq 0.
\end{array} $$
This can be reduced to
$$ \begin{array}{llll}
\a (-6 + a + d + 4 a d) \leq 0, & a(-6 + \a + c + 4 \a c) \leq 0.
\end{array} $$
Thus the inequalities $g(x)g^{\prime\prime}(x) - \frac{1}{2}g^\prime(x)^2 \geq 0$ and $h(x)h^{\prime\prime}(x) - \frac{1}{2}h^\prime(x)^2 \geq 0$ can be demonstrated to be true.

Additionally,
\[
f(0) = \frac{\a(\sqrt{1+4a} - 1 -a)  +a(\sqrt{1+4\a}-1-\a)}{2(\a+a-\a a)} > 0,
\]
\[
f(1) =\frac{\a\sqrt{9-ad} +a\sqrt{9-\a c}-\a-a-2\a a}{2(\a+a-\a a)} \leq 1,
\]
which follows from the above inequalities in addition to the fact that
$$ \begin{array}{llll}
\sqrt{9-ad} \leq 3, & \sqrt{9-\a c} \leq 3.
\end{array} $$
Additionally, both $\sqrt{9-ad}$ and $\sqrt{9-\a c}$ can only be simultaneously equal to $3$ when $c=d=0$ (and thus when $e=1$). This means that $f(1) = 1$ when $e=1$ and $f(1) < 1$ otherwise.

Thus the function is increasing and convex in $[0,1]$. Therefore, for $e\in[0,1)$ the
system has a unique fixed point, since $f(x)$ will only intersect the line $x$ at one point in the domain $[0,1]$. When $e=1$ the system has a fixed point $(0,0,1)$.
It can also be directly shown that when $e=1$, $f(\frac{9}{10}) < \frac{9}{10}$.
This demonstrates that the function $f(x)$ must cross the line $x$ prior to reaching the value $f(1) = 1$. This proves that there are two fixed points when $e=1$.

2) Let $\a \neq 0, a = 0$ can be handled similarly.
Substituting $x_1 = 1 - x_2 - x_3$ into the first equation of (\ref{eqv8}) gives
\begin{equation} \label{eqv9again}
x_2 = \frac{-2x_3-1+\sqrt{4(1- \a c) x^2_3 + 4(1 - \a)x_3 + 1 + 4\a}}{2\a} \geq 0,
\end{equation}
where $x_3 \in [0, \frac{\sqrt{1 + 4c} -1}{2c}]$ if $c \neq 0$ and $x_3 \in [0, 1]$  if $c = 0$. The second equation in (\ref{eqv8}) gives
\begin{equation} \label{eqv10again}
x_1 = \frac{1-x_3-d x_3^2}{2x_3+1} \geq 0,
\end{equation}
where $x_3 \in [0,\frac{\sqrt{1+4d}-1}{2d}]$
if $d \neq 0$ and $x_3 \in [0, 1]$  if $d = 0$. This restriction ensures that $x_1$ is positive.

Directly substituting (\ref{eqv9again}) and (\ref{eqv10again})
into $1 = x_1 + x_2 + x_3$, allowing $x = x_3$, gives
$$
1 = x +  \frac{1-x-d x^2}{2x+1} + \frac{-2x-1+\sqrt{4(1- \a c) x^2 + 4(1 - \a)x + 1 + 4\a}}{2\a}.
$$
Solving for the $x$s present in the first two terms of the above equation (but not any of the $x$s in the (\ref{eqv9again}) term), and allowing $F_\pm(x) = x$, gives the functions
\begin{align*}
F_\pm(x) = \frac{1+2\a+2x-\sqrt{h(x)}\pm \sqrt{q(x)}}{2\a(2-d)},
\end{align*}
where
\[\begin{array}{llll}
h(x) = 4(1-\a c)x^2+4(1-\a)x+1+4\a \geq 0,\\
q(x) = 2\a(2-d)(1-\sqrt{h(x)}+2x)+(1-\sqrt{h(x)}+2x + 2\a)^2.
\end{array}\]

Two examples of the function $F_\pm$ graphed against the line $x$ are given below.
\begin{figure}[h]
\begin{center}
    \includegraphics[width=11cm]{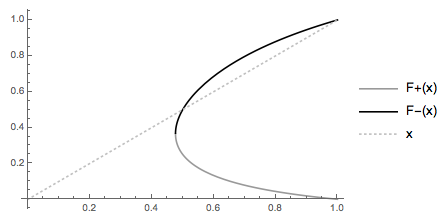}
    \caption{$\a = .0001, e = 1$.}
    \includegraphics[width=11cm]{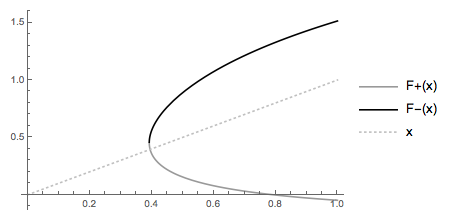}
    \caption{$\a = .0001, e = 0.18$.}
 \end{center}
    \label{fig:coffee}
\end{figure}

\begin{lemma}
$F_\pm(x) = x$ at a unique point when  $e \in [0,1)$ and at two points --- one of which is $F_+(1)=1$ --- when $e=1$.
\end{lemma}
\begin {proof}
We note that when $q(x) = 0$,
\begin{align*}
x_q = &\frac{1}{2c}[-5 + d + \sqrt{12-8d+d^2} \\
&+  (37+2d^2-10 \sqrt{12-8d+d^2} + 2d(-9+\sqrt{12-8d+d^2}) \\
&- 2c(2-d- \sqrt{12-8d+d^2} + \a(14+d^2-4 \sqrt{12-8d+d^2}+d(-8+ \sqrt{12-8d+d^2}))))^{\frac{1}{2}}].
\end{align*}

The curve $F_\pm$ is imaginary when $x < x_q$. When $x = x_q$, $F_+(x) = F_-(x)$, and when  $x > x_q$, $F_+(x) > F_-(x)$.

Additionally, at $x=1$ we have
$$
F_+(1) = \frac{3+2\a-\sqrt{h(1)} + \sqrt{q(1)}}{2\a(2-d)} \geq 1,
$$
where
\[\begin{array}{llll}
h(1) = 9-4\a c \geq 0,\\
q(1) = 2\a(2-d)(3-\sqrt{h(1)})+(3-\sqrt{h(1)} + 2\a)^2.
\end{array}\]
It can be readily shown that in the case that $e=1$, $F_+(1) = 1$. When $e \in [0,1)$, we would like to show that $F_+(1) > 1$. This can be reduced to showing that $\sqrt{q(1)}+2\a d + 3 > \sqrt{h(1)}+2\a$. It can be easily shown that $\sqrt{q(1)} > 2\a$.
Additionally, when $d=0$, $3 > \sqrt{h(1)}$. When $d \neq 0$,  $3 \geq \sqrt{h(1)}$, but $2\a d > 0$. The culmination of the above facts proves the inequality $\sqrt{q(1)}+2\a d + 3 > \sqrt{h(1)}+2\a$ to be always true.

Thus, $F_+(1) > 1$ when $e<1$. It can be similarly shown that $mF_-(1) < 1$. Analysis of the derivations of $F_\pm$ shows that the following inequalities are true.
$$
F_+'(x) = \frac{1}{2\a(2-d)}\left(2-\frac{h'(x)}{2\sqrt{h(x)}} + \frac{q'(x)}{2\sqrt{q(x)}}\right)  \geq 0,
$$
$$
F_-'(x) =  \frac{1}{2\a(2-d)}\left(2-\frac{h'(x)}{2\sqrt{h(x)}} - \frac{q'(x)}{2\sqrt{q(x)}}\right)  \leq 0,
$$
$$
F_+''(x) =  \frac{1}{2\a(2-d)}\left(-\frac{h(x)h''(x)-\frac{1}{2}(h'(x))^2}{\sqrt{(h(x))^3}} +
\frac{q(x)q''(x)-\frac{1}{2}(q'(x))^2}{\sqrt{(q(x))^3}}\right)  \leq 0,
$$
for all $x > x_q$.

The above demonstrates that $F_\pm(x)$ must intersect the line $x$ at least once on $[0,1)$. Because $F_+(x)$ is increasing and concave, and $F_-(x)$ is decreasing, $F_\pm(x)$ and $x$ will only intersect once on $[0,1)$. Additionally, $F_+(1)=1$ only when $e=1$. Thus the lemma is proven.
\end{proof}

Therefore, a direct extension of the above lemma shows that when $e=1$, the system (\ref{eqv6}) has two fixed points$-$one of which is $(0,0,1)$. And when $e<1$, it has a unique fixed point.

3) The case $\a = 0, a \neq 0$ can be handled analogously to the previous case.

4)  Let $\a = a = 0$. The system is therefore
\begin{equation}
\left\{\begin{array}{llll}  \label{eqv12}
x_1=cx^2_3 + 2x_2x_3\\[4mm]
x_2=dx^2_3 + 2x_1x_3\\[4mm]
x_3= (x_1 + x_2)^2 +  ex^2_3 = (1 - x_3)^2 +  ex^2_3
\end{array}\right.
\end{equation}

The proof of the fourth case will follow directly from the following lemma.
\begin{lemma} \label{smplFxd}
If $e \in [0,1)$, the system has a unique fixed point, $(x^*_1, x^*_2, x^*_3)$, where
\begin{align*}
&x^*_1 = \frac{7+24d-2e-19de-5e^2+2e^2d + (3-11d+4de+3d^2)\sqrt{5-4e}}{2 (1 + e) (-13 + 6  \sqrt{5 - 4 e} + 6 e + e^2)},\\
 &x^*_2 =  1 -  \frac{7+24d-2e-19de-5e^2+2e^2d + (3-11d+4de+3d^2)\sqrt{5-4e}}{2 (1 + e) (-13 + 6  \sqrt{5 - 4 e} +   6 e + e^2)} - \frac{3 - \sqrt{5-4e}}{2(1+e)}, \\
   & x^*_3 = \frac{3 - \sqrt{5-4e}}{2(1+e)}.
 \end{align*}
   However, when $e = 1$ the system has two fixed points: $(0, 0, 1)$ and $(\frac{1}{4}, \frac{1}{4}, \frac{1}{2})$.
\end{lemma}

\begin{proof} Examine each possible case.
\begin {itemize}
\item Let $e \in [0,1)$. Solving the third equation of (\ref{eqv12}) for $x_3$ gives $x^*_3 = \frac{3 \pm \sqrt{T}}{2(1+e)} > 0$ where $T = 5-4 e > 0$.

Assume for the purpose of contradiction that $\frac{3 + \sqrt{T}}{2(1+e)} \leq 1$. This can be reduced to $ \sqrt{T} \leq 2 e-1$. If $e \leq \frac{1}{2}$, then $2 e-1 \leq 0$ and $\sqrt{T} \geq 0$, so $\frac{3 + \sqrt{T}}{2(1+e)} \nleq 1$. Additionally, if $e > \frac{1}{2}$ then $0 <  \sqrt{T} \leq 2 e-1$ which can be reduced to $1 \leq e$ which is not true for $e \in [0,1)$. Thus $\frac{3 + \sqrt{T}}{2(1+e)} \nleq 1$ for any $e \in [0,1)$ and is not a fixed point of $x_3^\prime = (1-x_3)^2 + ex_3^2$.

It can be proved that $\frac{3 - \sqrt{T}}{2(1+e)} \leq 1$ because it can be reduced to $ \sqrt{T} \geq 1-2 e$. As it was shown previously that , $ \sqrt{T} \leq 1-2 e$ is false for all $e \in [0,1)$, it must be that $ \sqrt{T} > 1-2 e$.Therefore,
$$
x^*_3 = \frac{3 - \sqrt{5-4e}}{2(1+e)},
$$
is a unique fixed point of the system.

Substituting $x^*_3$ into the first two equations of (\ref{eqv12}) gives $x_1 = c{x^*_3}^2 + 2 x_2 x^*_3$ and $x_2 = d{x^*_3}^2 + 2 x_1 x^*_3$. Substituting this value of $x_2$ into $x_1$ and reducing gives
$$
x^*_1 = \frac{(3x^*_3-1)(c + 2dx^*_3)}{5 + e-12x^*_3},
$$
which can be written as
$$
x^*_1 = \frac{7+25d-2e-19de-5e^2+2e^2d + (3-11d+4de+3d^2)\sqrt{5-4e}}{2 (1 + e) (-13 + 6  \sqrt{5 - 4 e} +
   6 e + e^2)}.
$$
Additionally, we know that $x^*_2 = 1 - x^*_1 - x^*_3$ which yields
$$
x^*_2 = 1 -  \frac{7+25d-2e-19de-5e^2+2e^2d + (3-11d+4de+3d^2)\sqrt{5-4e}}{2 (1 + e) (-13 + 6  \sqrt{5 - 4 e} +
   6 e + e^2)} - \frac{3 - \sqrt{5-4e}}{2(1+e)}.
$$
Thus, $(x^*_1, x^*_2, x^*_3)$ is a unique fixed point of the system.

\item Let $e = 1$. The system is therefore
\begin{equation}
\left\{\begin{array}{llll}  \label{dbleFxd}
x_1= 2x_2x_3,\\[4mm]
x_2= 2x_1x_3,\\[4mm]
x_3= (1 - x_3)^2 +  x^2_3.
\end{array}\right.
\end{equation}
Additionally, we know that $x^*_3 = \frac{3\pm\sqrt{(5 - 4e)}}{2(1+e)}$; therefore, $x^*_3 = \frac{1}{2}$ or $1$.
When $x^*_3 = \frac{1}{2}$, it follows from (\ref{dbleFxd}) that $x_1 = x_2$, and it follows from $x_1 + x_2 + x_3 = 1$ that $x_1 + x_2 = \frac{1}{2}$. Thus, $x^*_1 = x^*_2 = \frac{1}{4}$ and the point $(\frac{1}{4}, \frac{1}{4}, \frac{1}{2}) \in S^2$ is a fixed point of the system. When $x^*_3 = 1$, it follows from $x_1 + x_2 + x_3 = 1$ that $x^*_1 = x^*_2 = 0$. Thus, the point $(0, 0, 1) \in S^2$ is a second fixed point of the system.
\end{itemize}
\end{proof}

By the above cases, all possible values for the system are considered and the theorem is proved.
\end{proof}

\section{ The type of the fixed point}

\begin{defn}\cite{De}. A fixed point $x^*$ of the operator  $V$ is called hyperbolic if its
Jacobian $J$  at $x^*$ has no eigenvalues on the unit circle.
\end{defn}

\begin{defn} \cite{De}. A hyperbolic fixed point $x^*$ is called:
\begin{itemize}
  \item [i)]  attracting  if all the eigenvalues of the Jacobian $J(x^*)$ are less than 1 in absolute value;
  \item [ii)] repelling   if all the eigenvalues of the Jacobian $J(x^*)$ are greater than 1 in absolute value;
 \item [iii)] a saddle    otherwise.
 \end{itemize}
\end{defn}

To find the type of a fixed point we use $x_3=1-x_1-x_2$ to rewrite QSO (\ref{eqv6}) as follows:
$$
V:\left\{\begin{array}{llll}
x^\prime_1= & c-2c x_1 + c x_1^2 + 2(c-1)x_1 x_2 + 2(1-c)x_2 + (\a + c -2)x_2^2,\\[3mm]
x^\prime_2= & d-2d x_2 + d x_2^2 + 2(d-1)x_1 x_2 + 2(1-d)x_1 + (a + d -2)x_1^2,
\end{array}\right.
$$
where $(x_1,x_2)\in\{(x,y):x,y\geq0, \ \ 0\leq x+y\leq1\}$ and  $x_1,x_2,$ are the first
two coordinates of a point lying in the two-dimensional simplex.

The Jacobian, $J(x^*)$, has the representation
\begin{equation} \label{eqv14}
\left (\begin{array}{lll}
-2 c (1 - x^*_1 - x^*_2) - 2 x^*_2 &
 2(\a-1)x^*_2 + 2(1-c) (1 - x^*_1 - x^*_2) \\
 2(a-1)x^*_1 + 2 (1-d)(1 - x^*_1 - x^*_2) &
- 2 d (1 - x^*_1 - x^*_2)  -2 x^*_1
\end{array}\right).
\end{equation}

The Jacobian (\ref{eqv14}) has the eigenvalues $\lambda_{1,2} = ex^*_3 -1 \pm  \sqrt{D(a, \a, c, e)}$, where
\begin{multline} \label{D}
D \equiv D(a, \a, c, e) =  (e x^*_3-1)^2 +4e{x_3^*}^2\\
+ 4[(b\b-1)x^*_1 x^*_2 +(a(1-c)-1)x^*_1 x^*_3+(\a(1-d)-1)x^*_2 x^*_3].
\end{multline}

The classification of these eigenvalues is as follows:
\begin{equation} \label{eigenClass}
\left\{\begin{array}{llll}
\text{If } D < -1+(1- ex^*_3)^2,  & \text{the fixed point is repelling;}\\[4mm]
\text{If }D = -1+(1- ex^*_3)^2,  & \text{the fixed point is nonhyperbolic;}\\[4mm]
\text{If } -1+(1- ex^*_3)^2 < D < 0,  & \text{the fixed point is attracting;}\\[4mm]
\text{If }D = 0 \text{ and } ex^*_3 = 0, & \text{the fixed point is nonhyperbolic;}\\[4mm]
\text{If }D = 0 \text{ and } ex^*_3 > 0, & \text{the fixed point is attracting;}\\[4mm]
\text{If }0 < D < e^2{x^*}^2_3, & \text{the fixed point is attracting;}\\[4mm]
\text{If }D = e^2{x^*}^2_3,  & \text{the fixed point is nonhyperbolic;}\\[4mm]
\text{If }e^2{x^*}^2_3  < D < (2 - ex^*_3)^2,  & \text{the fixed point is a saddle point;}\\[4mm]
\text{If }D = (2 - ex^*_3)^2,  & \text{the fixed point is nonhyperbolic;}\\[4mm]
\text{If }(2- ex^*_3)^2 < D, & \text{the fixed point is repelling.}
\end{array}\right.
\end{equation}

In  \cite{RJ2} it was proven that strictly non-Volterra QSOs with $m=3$ have a
unique fixed point and that the type of the hyperbolic fixed point can never be attracting. However, in the system (\ref{eqv6}),
the introduction of the parameter $e$ has caused an attracting fixed point to become possible,
as evidenced by the following example.

{\bf Example.} When $c=\b=\frac{5}{8}$, $d=b=0$, $\a=e=\frac{3}{8}$, and $a=1$ the system (\ref{eqv6}) can be written
\begin{equation} \label{eqvE}
V:\left\{\begin{array}{llll}
x^\prime_1= \frac{3}{8} x^2_2 + \frac{5}{8}x^2_3 + 2x_2x_3,\\[4mm]
x^\prime_2=x^2_1 + 2x_1x_3,\\[4mm]
x^\prime_3=\frac{5}{8}x^2_2 + \frac{3}{8}x^2_3 + 2x_1x_2.
\end{array}\right.
\end{equation}
The fixed point of this system is $(\frac{1}{3},\frac{1}{3},\frac{1}{3})$. Substituting these values into the eigenvalues of the Jacobian gives $\lambda_{1,2} = -\frac{7}{8} \pm \frac{1}{8}\sqrt{\frac{13}{3}}\hspace{1mm}i.$ Therefore, $|\lambda_{1,2}| = \sqrt{\frac{5}{6}}<1,$ and $(\frac{1}{3},\frac{1}{3},\frac{1}{3})$ is attracting.

In the case where $\a=a=0$ (i.e. system (\ref{eqv12})), the eigenvalues of the Jacobian can be written
$$
\lambda_{1,2} = e x_3^* - 1 \pm \sqrt{1 + 2(e-2)x_3^* + (4+e^2){x_3^*}^2},
$$
where $x_3^* = \frac{3-\sqrt{5-4e}}{2(1+3)}$. It can be proven that $|\lambda_{1}| < 1$ and  $|\lambda_{2}| > 1$ for all $e$.
The inequality $|\lambda_{1}| < 1$ can be proven from the facts that
 \begin{align*}
&0 \leq 1 + 2(e-2)x_3^* + (4+e^2){x_3^*}^2,\\
 &0 <  e x_3^* + \sqrt{1 + 2(e-2)x_3^* + (4+e^2){x_3^*}^2 },\\
 &{x^*_3}^2 + e x^*_3 < \frac{3}{4} + x^*_3.
 \end{align*}
 The last inequality,  ${x^*_3}^2 + e x^*_3 < \frac{3}{4} + x^*_3$, follows from the fact that ${x^*_3}^2 \leq x^*_3$ and from substituting the value of $x^*_3$ into the inequality $e x^*_3 < \frac{3}{4}$, which gives $3e < 3 + 2e\sqrt{5-4e}$. Additionally, $|\lambda_{2}| > 1$ can be proven from the fact that $e x_3^* - 1 - \sqrt{1 + 2(e-2)x_3^* + (4+e^2){x_3^*}^2 } < -1$. This can be reduced to the quadratic $1 + (2e-4)x^*_3+4{x^*_3}^2$, which is always positive.
This means that the fixed point $x_3^*$ is a saddle point for all $e \in [0,1)$. This can be seen clearly in the graph below.

\begin{figure}[h!]
  \includegraphics[width=11cm]{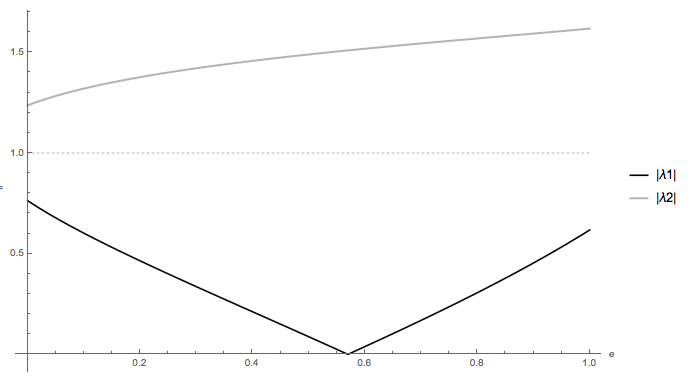}
  \caption{Effect of $e$ on eigenvalues.}
  \label{fig:boat1}
\end{figure}

In the case that $a=\a=0$ and $e=1$, the fixed point $(\frac{1}{4}, \frac{1}{4}, \frac{1}{2})$ is a saddle point and $(0,0,1)$ is a repeller.

Additionally, for all cases where $e=1$ and $(0,0,1)$ is a second fixed point of the system, it can be seen that $\lambda_{1,2} = \pm 2$. Thus the second fixed point that occurs when $e=1$ is always a repeller.

\begin{rk}
A non-Volterra QSO with $m=3$  (\ref{eqv6}) also has a unique fixed point in all cases except $e=1$ for which there are two fixed points, one of which$-(0,0,1)-$is always a repeller. Additionally, the fixed point of a non-Volterra QSO may be attracting.
\end{rk}

\section{The $\omega-$limit set}

In this section we shall describe the $\omega$-limit set of trajectories under certain parameter restrictions.
Let $x^0=(x^0_1, x^0_2, x^0_3) \in S^2$ be the initial point and let $\{x^{(n)}, n = 0,1,2,\dots\}$ be
the trajectory of $x^0$ under the action of the operator (\ref{eqv6}); that is,
$$
 x^{(n)}=(x^{(n)}_1, x^{(n)}_2, x^{(n)}_3)=V(x^{(n-1)}), \hspace{6mm} n = 1,2,\dots  \hspace{4mm}  x^{(0)} = x^0.
$$

For simplicity we shall examine the case in which $a = \a = 0$; therefore the operator can be written as (\ref{eqv12}), i.e.,
\begin{equation}
\left\{\begin{array}{llll}  \label{eqv15}
x'_1=cx^2_3 + 2x_2x_3,\\[3mm]
x'_2=dx^2_3 + 2x_1x_3,\\[3mm]
x'_3= (1 - x_3)^2 +  ex^2_3,
\end{array}\right.
\end{equation}
which demonstrates that the trajectory of the third coordinate $\{x^{(n)}_3\}$ is
defined by the dynamical system of
 $$\varphi(x) = (1-x)^2 + ex^2.$$

 \subsection{ Case $e=1$} In this case operator has the form (denoted by $V_1$)
\begin{equation}
V_1: \ \ \left\{\begin{array}{llll}  \label{eqv151}
x'_1=2x_2x_3,\\[3mm]
x'_2=2x_1x_3,\\[3mm]
x'_3= (1 - x_3)^2 + x^2_3.
\end{array}\right.
\end{equation}
This operator has been studied in \cite{H}:
It is easy to see that
$${\rm Fix}(V_1)=\{(1/4,1/4,1/2), (0,0,1)\}.$$
For any $(x,y,1/2)$ with $x+y=1/2$, we have
$$V^2_1(x,y,1/2)=V_1(y,x,1/2)=(x,y,1/2).$$
In fact the set of all 2-periodic points is
$${\rm Per}_2(V_1)=\{(x,y,1/2)\in S^2: x+y=1/2\}.$$
Moreover it is easy to see that $V_1(x,y,0)=(0,0,1)$, for any $(x,y,0)\in S^2$.

For $\theta\in [0, \infty)$, denote
$$M_\theta=\left\{\begin{array}{ll}
\{(x,y,z)\in S^2: xy=0\}, \ \ \mbox{if} \ \ \theta=0,\\[2mm]
\{(x,y,z)\in S^2: x=\theta y \ \ or \ \ x={1\over \theta}y\}, \ \ \mbox{if} \ \ \theta\in (0, \infty).
\end{array}\right.$$

\begin{thm} \label{eqvA}\cite{H} If $e=1$, then
\begin{itemize}
\item[1.] For any initial point $(x_1^0, x_2^0, x_3^0)$, with $x_3^0=0$ or $x_3^0=1$ we have
$$
 \lim_{n\rightarrow \infty} V_1^n (x_1^0, x_2^0, x_3^0) =(0,0,1).$$
 \item[2.] For any initial point $x^0=(x_1^0, x_2^0, x_3^0)$, with $x_3^0 \ne 0$ and  $x_3^0 \ne 1$, there exists $\theta\in [0, +\infty)$, such that
$x^0\in M_\theta$. Moreover,
 $$
 \lim_{n\rightarrow \infty} V_1^n (x_1^0, x_2^0, x_3^0) =
\left\{\begin{array}{ll}
\left(\frac{\theta}{2(\theta+1)}, \frac{1}{2(\theta+1)}, \frac{1}{2}\right), \ \ n=2k, k=1,2,3,...\\[2mm]
\left(\frac{1}{2(\theta+1)}, \frac{\theta}{2(\theta+1)}, \frac{1}{2}\right), \ \ n=2k+1, k=1,2,3,...
\end{array}\right.
$$
\end{itemize}
\end{thm}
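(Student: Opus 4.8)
The plan is to exploit the fact that the third coordinate evolves autonomously: since $x_3' = \varphi(x_3)$ with $\varphi(x) = (1-x)^2 + x^2$, the sequence $\{x_3^{(n)}\}$ is governed entirely by the one-dimensional map $\varphi$, and the first two coordinates are then slaved to it through $x_1' = 2x_2 x_3$ and $x_2' = 2x_1 x_3$. Part 1 is immediate and I would dispose of it first: if $x_3^0 = 1$ then $x_1^0 = x_2^0 = 0$ and $x^0 = (0,0,1)$ is the fixed point, while if $x_3^0 = 0$ then one application of $V_1$ sends $(x_1^0, x_2^0, 0)$ to $(0,0,1)$, after which the orbit is constant. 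The remaining work concerns the case $x_3^0 \in (0,1)$.

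For part 2 the first step is to analyze $\varphi$. Its fixed points are $1/2$ and $1$, with $\varphi'(1/2) = 0$ (superattracting) and $\varphi'(1) = 2$ (repelling). I would check that $\varphi$ maps $(0,1)$ into $[1/2, 1)$: indeed $\varphi$ attains its minimum $1/2$ at $x = 1/2$ and $\varphi(x) = 1$ only for $x \in \{0,1\}$, so $x_3^{(1)} \in [1/2,1)$. On $[1/2,1)$ the map is increasing and satisfies $\varphi(x) - x = (2x-1)(x-1) \le 0$, with equality only at $1/2$; hence $\{x_3^{(n)}\}_{n \ge 1}$ is monotone decreasing and bounded below by $1/2$, forcing $x_3^{(n)} \to 1/2$. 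Consequently $x_1^{(n)} + x_2^{(n)} = 1 - x_3^{(n)} \to 1/2$.

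The second step extracts the $2$-periodic structure from the symmetry of the first two equations. Whenever $x_1^{(n)}, x_2^{(n)}, x_3^{(n)} > 0$, dividing the two relations yields $\frac{x_1^{(n+1)}}{x_2^{(n+1)}} = \frac{x_2^{(n)}}{x_1^{(n)}}$, so the ratio $r_n = x_1^{(n)}/x_2^{(n)}$ satisfies $r_{n+1} = 1/r_n$ and is therefore $2$-periodic: $r_n = \theta$ for even $n$ and $r_n = 1/\theta$ for odd $n$, where $\theta = x_1^0/x_2^0$. Combining the fixed ratio on each parity class with $x_1^{(n)} + x_2^{(n)} \to 1/2$, I would write $x_1^{(n)} = \frac{r_n}{1+r_n}\,(1 - x_3^{(n)})$ and $x_2^{(n)} = \frac{1}{1+r_n}\,(1 - x_3^{(n)})$ and pass to the limit along even and odd $n$ separately, producing exactly the two points $\left(\frac{\theta}{2(\theta+1)}, \frac{1}{2(\theta+1)}, \frac12\right)$ and $\left(\frac{1}{2(\theta+1)}, \frac{\theta}{2(\theta+1)}, \frac12\right)$. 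The membership $x^0 \in M_\theta$ is then automatic from the definition of $\theta$.

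The main obstacle is bookkeeping at the boundary rather than any deep estimate. I must first secure that the ratio is legitimate for all $n$: since $x_3^{(n)} \ge 1/2 > 0$ for $n \ge 1$ and $x_3^0 > 0$, positivity of $x_1^0, x_2^0$ propagates to all iterates by induction, so the division above is valid throughout in the generic case. The genuinely special cases are those with $x_1^0 x_2^0 = 0$ (that is, $\theta = 0$), where one of the first two coordinates vanishes and remains zero on alternate steps; here the ratio argument degenerates and I would compute the two parity subsequences directly — e.g. if $x_1^0 = 0$ then $x_1^{(n)} = 0$ for even $n$ and $x_2^{(n)} = 0$ for odd $n$, with the surviving coordinate tending to $1/2$ — and verify that the result coincides with the $\theta = 0$ specialization of the formulas, up to the evident $x_1 \leftrightarrow x_2$ symmetry (equivalently $\theta \leftrightarrow 1/\theta$, which fixes $M_\theta$) that accounts for the parity labeling. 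This symmetry/parity matching, together with the selection of the correct representative $\theta$ in the degenerate case, is the only point requiring real care.
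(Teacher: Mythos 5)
Your proposal is correct, and in fact it supplies something the paper itself does not contain: Theorem~\ref{eqvA} is quoted from the preprint \cite{H} and is stated in the paper without any proof, the text only recording the ingredients ${\rm Fix}(V_1)$, ${\rm Per}_2(V_1)$, the identity $V_1(x,y,0)=(0,0,1)$, and the sets $M_\theta$. Your argument is the natural completion of exactly those ingredients: the autonomous third coordinate with $\varphi(x)=(1-x)^2+x^2$, the invariance $\varphi\bigl((0,1)\bigr)\subset[\tfrac12,1)$ together with $\varphi(x)-x=(2x-1)(x-1)\le 0$ giving monotone convergence $x_3^{(n)}\to\tfrac12$, and the reciprocity $r_{n+1}=1/r_n$ of the ratio $r_n=x_1^{(n)}/x_2^{(n)}$, which combined with $x_1^{(n)}+x_2^{(n)}\to\tfrac12$ yields the two parity limits. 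All steps check out, including the induction keeping the ratio well defined and the separate treatment of $x_1^0x_2^0=0$. One point worth making explicit, which you correctly flag but could state more bluntly: the theorem as written is ambiguous, since $M_\theta=M_{1/\theta}$, so ``there exists $\theta$ with $x^0\in M_\theta$'' does not pin down the $\theta$ appearing in the limit formula; the formula holds precisely for the choice $\theta=x_1^0/x_2^0$ (when $x_2^0>0$), and in the degenerate case $x_2^0=0<x_1^0$ the only admissible representative $\theta=0$ produces the two limits with the parity labels interchanged. This is a defect of the cited statement rather than of your proof, and your symmetry remark is the right way to absorb it.
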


\subsection{ Case $e\in[0,1)$}
Now we consider the operator (\ref{eqv15}) for all $e\ne 1$.
By the above given results we know that this operator has a unique fixed point:
$${\rm Fix}(V)=\{x^*=(x_1^*, x_2^*, x_3^*)\},$$ which is never attractive.

Let us describe periodic points of the operator. By (\ref{eqv15}) the sequence $x^{(n+1)}=V^n(x^0)$ has the form
\begin{equation}
\left\{\begin{array}{llll}  \label{en}
x^{(n+1)}_1=c(x^{(n)}_3)^2 + 2x^{(n)}_2x^{(n)}_3,\\[3mm]
x^{(n+1)}_2=d(x^{(n)}_3)^2 + 2x^{(n)}_1x^{(n)}_3,\\[3mm]
x^{(n+1)}_3= \varphi(x^{(n)}_3).
\end{array}\right.
\end{equation}

\begin{lemma}\label{aa} If ${1/4}\leq e<1$ then the operator (\ref{eqv15}) does not have any $n$-periodic point, $n\geq 2$,
 different from the fixed point $x^*$.
\end{lemma}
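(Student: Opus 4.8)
The plan is to exploit the fact that in system (\ref{eqv15}) the third coordinate evolves autonomously, $x_3^{(n+1)}=\varphi(x_3^{(n)})$ with $\varphi(x)=(1-x)^2+ex^2$, and to reduce everything to the one-dimensional map $\varphi$ together with an affine recursion in the first two coordinates. Suppose $p=(p_1,p_2,p_3)$ is a periodic point of $V$ of least period $n\ge 2$. Projecting onto the third coordinate gives $\varphi^n(p_3)=p_3$, so $p_3$ is periodic for $\varphi$. The first main step is to show that, for $e\in[1/4,1)$, the map $\varphi$ has no periodic point of least period $\ge 2$; since $x_3^*$ is the unique fixed point of $\varphi$ in $[0,1]$ (established in Lemma~\ref{smplFxd}), this forces $p_3=x_3^*$ and hence $x_3^{(k)}\equiv x_3^*$ along the whole orbit.

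To rule out periodic orbits of $\varphi$ I would first dispose of genuine $2$-cycles. If $u\ne v$ satisfy $\varphi(u)=v$, $\varphi(v)=u$, then subtracting and using $\varphi(u)-\varphi(v)=(u-v)\big[(1+e)(u+v)-2\big]$ yields $u+v=\frac{1}{1+e}$; feeding this back into $\varphi(u)=v$ gives the quadratic $(1+e)^2u^2-(1+e)u+e=0$, whose discriminant equals $(1+e)^2(1-4e)$. For $e\ge 1/4$ this discriminant is $\le 0$, and at $e=1/4$ it vanishes and forces $u=v=x_3^*$, so no genuine $2$-cycle exists on $[1/4,1)$. I would then upgrade this to all periods at once: $\varphi$ is a continuous self-map of $[0,1]$ (indeed $\varphi([0,1])=[\tfrac{e}{1+e},1]\subset[0,1]$), so by Sharkovskii's theorem the existence of a periodic point of least period $m\ge 3$ would force one of period $2$. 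As period $2$ is excluded, $\varphi$ admits no periodic point of least period $\ge 2$. (Equivalently, one may note that $\varphi$ is quadratic, hence has negative Schwarzian derivative, and that $\varphi'(x_3^*)=1-\sqrt{5-4e}\in[-1,0)$, so the unique interior fixed point is attracting for $e>1/4$ and neutral at $e=1/4$; Singer's theorem then gives global attraction and excludes nontrivial cycles.)

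With $x_3\equiv x_3^*$ fixed along the orbit, the second step is immediate: the first two coordinates obey the affine recursion whose linear part is $\left(\begin{smallmatrix}0 & 2x_3^*\\ 2x_3^* & 0\end{smallmatrix}\right)$, with eigenvalues $\pm 2x_3^*$. Since $x_3^*<\tfrac12$ for every $e<1$ (because $x_3^*=\frac{3-\sqrt{5-4e}}{2(1+e)}<\tfrac12\iff e<1$), the spectral radius $2x_3^*$ is strictly less than $1$, so this affine map is a contraction with a unique fixed point $(x_1^*,x_2^*)$ and no orbit of period $\ge 2$. Hence $(p_1,p_2)=(x_1^*,x_2^*)$ and $p=x^*$, contradicting $n\ge 2$. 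This proves that the only periodic point of (\ref{eqv15}) for $1/4\le e<1$ is the fixed point $x^*$.

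I expect the main obstacle to be the passage from ``no $2$-cycle'' to ``no cycle of any period $\ge 2$'': the discriminant computation is routine, but the jump to all periods is where the real content lies. The cleanest device is Sharkovskii's theorem (using that $2$ is minimal among integers $\ge 2$ in the Sharkovskii order), and the only delicate point there is the boundary value $e=1/4$, where $\varphi'(x_3^*)=-1$ is neutral; one must check that the would-be $2$-cycle degenerates onto $x_3^*$ rather than genuinely existing, which the vanishing discriminant confirms. Everything else --- the decoupling of $x_3$, the uniqueness of the fixed point of $\varphi$, and the contraction estimate for the affine map --- is straightforward.
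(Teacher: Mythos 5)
Your proposal is correct and follows essentially the same route as the paper: rule out $2$-cycles of $\varphi$ (the paper solves $\varphi(\varphi(x))=x$ explicitly, you use the symmetric-sum/discriminant trick, but the content is identical), invoke Sharkovskii's theorem to exclude all periods $\geq 2$, and then reduce to the affine operator in $(x_1,x_2)$ with linear part of spectral radius $2x_3^*<1$, whose contraction property kills any remaining periodic points. Your version is slightly more explicit than the paper's at the last step (the paper only asserts that the affine map's fixed point is attractive), but there is no difference in approach.
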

\begin{proof} First we give analysis of the equation $\varphi(\varphi(x))=x$, (for existence of 2-periodic points) which
has solutions $x^*_3$, $\overline{x}_3 = \frac{1+\sqrt{1-4 e}}{2 (1+e)}$,
and $\overline{\overline{x}}_3 = \frac{1-\sqrt{1-4 e}}{2 (1+e)}$,
where $\varphi(\overline{x}_3) = \overline{\overline{x}}_3 $ and vice-versa. These numbers exist if and only iff $e<1/4$, and when $e=1/4$
the three numbers coincide.
Thus for ${1/4}\leq e<1$ there are no 2-periodic points of $\varphi$. By Sharkovskii's theorem \cite{De} we have that
$\varphi^n(x)=x$ does not have solution $x\ne x_3^*$ for all $n\geq 2$. Thus, $x_3^{(n)}=x_3$ has unique solution
$x_3=x_3^*$ for any $n\geq 1$. Using this fact we reduce the equation $V^n(x)=x$ (with $x=(x_1,x_2,x_3)$)
 of $n$-periodic points to the
equation $L^n(\hat x)=\hat x$ (with $\hat x=(x_1,x_2)$), where $L$ is the linear operator given by
$$L: \ \ \left\{\begin{array}{llll}  \label{el}
x'=c(x^{*}_3)^2 + 2x^{*}_3y,\\[3mm]
y'=d(x^{*}_3)^2 + 2x^{*}_3x.
\end{array}\right.$$
It is then easy to see that this linear operator has a unique
fixed point $(x_1^*, x_2^*)$ (the first two coordinates of the fixed point $x^*$).
This fixed point is attractive and therefore by the known theorem of
liner dynamical systems (see Chapter 3 of \cite{Gal}), we see that all trajectories of the linear operator tend to the fixed point.
Therefore this linear operator has no periodic points except $x^*$.
\end{proof}
\begin{lemma}\label{ab} If $0\leq e<{1/4}$, then the operator (\ref{eqv15}) has $2$-periodic points, $(\overline{x}_1,\overline{x}_3,\overline{x}_3)$ and $(\overline{\overline{x}}_1,\overline{\overline{x}}_2,\overline{\overline{x}}_3)$$-$different from the fixed point $x^*$$-$which are described explicitly below. Moreover the operator
does not have any $n$-periodic point for all $n\geq 3$.
\end{lemma}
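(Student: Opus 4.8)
The plan is to reduce the whole question to the one–dimensional map $\varphi(x)=(1-x)^2+ex^2$ that drives the third coordinate, and then analyse the transverse $(x_1,x_2)$–dynamics over each periodic orbit of $\varphi$ separately. First I would record, as in the proof of Lemma~\ref{aa}, that for $0\le e<1/4$ the equation $\varphi(\varphi(x))=x$ produces a genuine $2$–cycle $\{\overline{x}_3,\overline{\overline{x}}_3\}$ of $\varphi$. Since the third coordinate of any genuine $2$–periodic orbit must itself be $2$–periodic under $\varphi$ and cannot equal the fixed value $x_3^*$ (over which, by Lemma~\ref{aa}, the transverse map $L$ has only a fixed point), such an orbit $\{p,q\}$ has $p_3=\overline{x}_3$, $q_3=\overline{\overline{x}}_3$, and the first two equations of (\ref{eqv15}) force the linear system
\begin{align*}
q_1&=c\,\overline{x}_3^{\,2}+2\overline{x}_3\,p_2, & q_2&=d\,\overline{x}_3^{\,2}+2\overline{x}_3\,p_1,\\
p_1&=c\,\overline{\overline{x}}_3^{\,2}+2\overline{\overline{x}}_3\,q_2, & p_2&=d\,\overline{\overline{x}}_3^{\,2}+2\overline{\overline{x}}_3\,q_1.
\end{align*}
Eliminating $q_1,q_2$ gives $p_1\bigl(1-4\overline{x}_3\overline{\overline{x}}_3\bigr)=c\,\overline{\overline{x}}_3^{\,2}+2d\,\overline{\overline{x}}_3\,\overline{x}_3^{\,2}$ and a companion formula for $p_2$; since $\overline{x}_3\,\overline{\overline{x}}_3=e/(1+e)^2$, one has $1-4\overline{x}_3\overline{\overline{x}}_3=(1-e)^2/(1+e)^2>0$, so the system has a unique solution whose four transverse coordinates are nonnegative (each numerator is a sum of nonnegative terms). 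Writing it out produces the explicit points recorded in the statement.

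Next I would check that this solution genuinely lies on $S^2$. Summing the first two equations of the system shows that the slice–sums $p_1+p_2$ and $q_1+q_2$ satisfy a $2\times2$ linear system with determinant $-(1-4\overline{x}_3\overline{\overline{x}}_3)\ne0$, whose unique solution is $p_1+p_2=1-\overline{x}_3$ and $q_1+q_2=1-\overline{\overline{x}}_3$ (using $\varphi(\overline{x}_3)=\overline{\overline{x}}_3$ and vice versa). Hence $p,q\in S^2$ automatically. Because $e<1/4$ forces $\overline{x}_3\ne\overline{\overline{x}}_3$, the orbit has exact period $2$ and is distinct from the fixed point $x^*$, giving the two asserted $2$–periodic points.

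For the second assertion, the third coordinate of any $n$–periodic point of (\ref{eqv15}) is an $n$–periodic point of $\varphi$, so it suffices first to prove that $\varphi$ has no periodic point of period $\ge3$. By Sharkovskii's theorem \cite{De}, every integer $m\ge3$ with $m\ne4$ dominates $4$ in the Sharkovskii order, i.e.\ a period-$m$ orbit would force a period-$4$ orbit; hence it is enough to rule out a period-$4$ orbit of $\varphi$. The conceptual reason it is absent is that the affine conjugacy $y=(1+e)x-1$ turns $\varphi$ into $g(y)=y^2+(e-1)$, and for $e\in[0,1/4)$ the parameter $e-1$ lies in $(-5/4,-3/4)$, strictly above the value $-5/4$ at which the real quadratic family first acquires a $4$–cycle; in this window the only periodic orbits are the two fixed points and the single attracting $2$–cycle, whose multiplier is $\varphi'(\overline{x}_3)\varphi'(\overline{\overline{x}}_3)=4e<1$. \emph{This one–dimensional claim is the main obstacle.} I would make it rigorous either by invoking the monotonicity of the real quadratic family (Milnor--Thurston kneading theory), or self-containedly from the negative Schwarzian derivative of $\varphi$: Singer's theorem then shows the attracting $2$–cycle is the unique non-repelling cycle and that its immediate basin swallows the critical point $1/(1+e)$, after which one excludes the remaining (necessarily repelling) period-$4$ candidates by verifying that $\varphi^4(x)-x$ has no further real zeros in the invariant interval.

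Finally I would lift the one–dimensional statement back to $V$. If the third coordinate of an $n$–periodic point has $\varphi$–period $1$, then $x_3\equiv x_3^*$ and $(x_1,x_2)$ evolves under the affine map $L$ of Lemma~\ref{aa}, whose linear part has eigenvalues $\pm2x_3^*$ with $2x_3^*<1$ (equivalent to $e<1$); being an affine contraction, $L$ has no periodic point other than its fixed point, so no $V$–period $\ge2$ occurs here. If the $\varphi$–period is $2$, then $n$ is even and the two–step transverse map over the $2$–cycle is affine with scalar linear part $4\overline{x}_3\overline{\overline{x}}_3=4e/(1+e)^2<1$, again a contraction whose only periodic point is the $2$–cycle already found. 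Since $\varphi$ has no period $\ge3$, these are the only possibilities, so (\ref{eqv15}) has no $n$–periodic point for any $n\ge3$, which completes the proof.
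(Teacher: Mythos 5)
Your reduction to the one-dimensional map $\varphi$, the linear-algebra construction of the $2$-periodic points (including the determinant $1-4\overline{x}_3\overline{\overline{x}}_3=(1-e)^2/(1+e)^2>0$, the simplex-membership check, and the contraction argument that lifts uniqueness back to $V$) all match the paper's argument, and are if anything more carefully justified than the paper's own write-up. The problem is the step you yourself flag as ``the main obstacle'': the claim that $\varphi$, equivalently $g(y)=y^2+(e-1)$ with $e-1\in[-1,-3/4)$, has no period-$4$ orbit. Neither of your two proposed routes actually closes this. Singer's theorem (negative Schwarzian) only bounds the number of \emph{non-repelling} cycles; it says nothing about repelling $4$-cycles, and ``verifying that $\varphi^4(x)-x$ has no further real zeros'' is not a proof step but a restatement of the goal --- that degree-$16$ polynomial (degree $12$ after removing the fixed and period-$2$ points) is exactly the object that has to be analyzed. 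The kneading/monotonicity route has the same hole one layer down: monotonicity tells you the set of periods can only grow as the parameter decreases, but to conclude that period $4$ is absent for parameters above $-5/4$ you must still know that the \emph{first} real $4$-cycle is born in the period-doubling at $-5/4$ rather than in an earlier saddle-node bifurcation, and ruling that out is again essentially the same computation.

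The paper sidesteps this by citation rather than proof: it conjugates $\varphi$ to the logistic map $\xi(x)=\mu x(1-x)$ with $\mu=1+\sqrt{5-4e}\in(3,\,1+\sqrt{5}\,]$ for $e\in[0,1/4)$, and quotes from \cite{Shar} the classical fact that for $3<\mu\le 1+\sqrt{5}$ the logistic map has exactly one $2$-periodic orbit and every trajectory not starting at a fixed point converges to it, which immediately forbids any periodic orbit of period $\ge 3$. Your Sharkovskii reduction (any period $\ge 3$ forces period $4$) is correct and is a nice economy --- the paper uses the same style of argument only in Lemma~\ref{aa} --- but it does not remove the need for this one-dimensional input. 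The cleanest repair of your proof is to replace your paragraph on the quadratic family by that same (or an equivalent) citation; with that substitution everything else you wrote goes through.
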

The following proof will rely on the concept of  topological conjugacy.

\begin{defn}
Let $f:A\to A$ and $g: B \to B$ be two maps.
$f$ and $g$ are called topologically conjugate if there exists a homeomorphism $h: A \to B$ such
that, $h \circ f = g \circ h$.
\end{defn}

Additionally, it is known (as shown in \cite{De}) that mappings which are topologically conjugate are completely equivalent in
terms of their dynamics.  In particular, $h$ gives a one-to-one correspondence between periodic points of $f$ and $g$.
\begin{proof}
 It can be seen from $|\varphi^{'}(x_3^*)| = |1-\sqrt{5-4e}|$ that if $e \in [0,\frac{1}{4})$, then
 $x^*_3$ is a repelling fixed point of $\varphi(x)$. As mentioned in the proof of the previous lemma, if
  $e < \frac{1}{4}$ then the function $\varphi(\varphi(x))$ has fixed points
  $x^*_3$, $\overline{x}_3 = \frac{1+\sqrt{1-4 e}}{2 (1+e)}$, and
  $\overline{\overline{x}}_3 = \frac{1-\sqrt{1-4 e}}{2 (1+e)}$, i.e., $\varphi^2(\overline{x}_3) =\overline{x}_3$, and $\varphi^2(\overline{\overline{x}}_3)=\overline{\overline{x}}_3$.
By substituting $x_3= \overline{x}_3,  \overline{\overline{x}}_3$ in the first and second equations of the system
$V^2(x)=x$ and solving it with respect to $x_1$ and $x_2$, we get
\begin{equation}\label{pp}\begin{array}{llll}
\overline{x}_1 = \frac{2de + (1-e-2e^2)c + (2de-c(1+e))\sqrt{1-4e}}{2(e-1)^2(e+1)},\\[3mm]
\overline{\overline{x}}_1 = \frac{2d + (1-e-2e^2)c + (-2d+c(e+1))\sqrt{1-4e}}{2(e-1)^2(e+1)},\\[3mm]
\overline{x}_2 = \frac{2ce + (1-e-2e^2)d + (2ce-d(1+e))\sqrt{1-4e}}{2(e-1)^2(e+1)},\\[3mm]
\overline{\overline{x}}_2 = \frac{2c + (1-e-2e^2)d + (-2c+d(e+1))\sqrt{1-4e}}{2(e-1)^2(e+1)}.
\end{array}\end{equation}
Now we show that the operator has no $n$-periodic point if $n\geq 3$.
It is easy to see that for each solution $x=\tilde x_3$
of $\varphi^n(x)=x$, one gets a unique $(\tilde x_1, \tilde x_2)$ from $V^n(x)=x$. Therefore
the number of periodic points of $V$ is equal to the periodic points of $\varphi$.
Now we show that $\varphi$ does not have $n$-periodic points for any $n\geq 3$.
Taking $h(x)=ax+b$ one can see that our function $\varphi$ is topologically conjugate to the logistic map
$\xi(x)=\mu x(1-x)$ with $\mu=1+\sqrt{5-4e}$. For $e\in [0,{1\over 4})$ we have $\mu\in (3, 1+\sqrt{5}]$.
For the logistic map the following is known (see \cite{Shar}):

If $\mu$ between 3 and $1+\sqrt {5}\approx 3.236$, then $\xi$ has one 2-periodic orbit
and all trajectories (except when started at the fixed point) will approach this 2-periodic orbit.

From this fact, by the conjugacy argument, it follows that $\varphi$, and thus $V$, do not have $n$-periodic point for any $n\geq 3.$
\end{proof}
\begin{rk} The conjugacy argument mentioned in the proof of Lemma \ref{ab} can be also used to give an alternative proof of Lemma \ref{aa}.
In this case $\mu=1+\sqrt{5-4e}\in (2, 3] $ and (see \cite{Shar}) the function  $\xi$ has no periodic points (except fixed points).
All trajectories will converge to the non-zero fixed point.
\end{rk}
\begin{thm} \label{eqvC}
Let $e\in [0,1)$.
\begin{itemize}
\item[1.] If $e<\frac{1}{4}$ then there exists an open set $\mathcal U\subset S^2$ such that $\overline{x}, \overline{\overline{x}}\in \mathcal U$
  and for any $x^0=(x_1^0, x_2^0, x_3^0) \in \mathcal U$ we have
\begin{equation}\label{ak}
 \lim_{n\rightarrow \infty} V^n(x^0)  =
\left\{\begin{array}{llll}
\overline{x}, \ \ \mbox{if} \ \  x_3^0 \neq x^*_3 \text{ and } n=2k,\\[4mm]
x^*, \ \ \mbox{if} \ \  x_3^0 = x^*_3,\\[4mm]
\overline{\overline{x}}, \ \ \mbox{if} \ \    x_3^0 \neq x^*_3 \text{ and } n=2k+1,
\end{array}\right.
\end{equation}
where $x^*=(x^*_1,x^*_2,x^*_3)$ is fixed point and  $\overline{x}=(\overline{x}_1,\overline{x}_3,\overline{x}_3)$, $\overline{\overline{x}}=(\overline{\overline{x}}_1,\overline{\overline{x}}_2,\overline{\overline{x}}_3)$ are periodic points described above.

\item[2.] If $e\geq \frac{1}{4}$ then there exists an open set $U\subset S^2$ such that $x^*\in U$ and
for any $x^0=(x_1^0, x_2^0, x_3^0) \in  U$ we have
 $$\lim_{n\rightarrow \infty} V^n (x_1^0, x_2^0, x_3^0) = (x^*_1, x^*_2, x^*_3).$$
\end{itemize}
\end{thm}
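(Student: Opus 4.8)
The plan is to exploit the skew-product structure of \eqref{eqv15}: the third coordinate evolves autonomously by $x_3^{(n+1)}=\varphi(x_3^{(n)})$, while the pair $(x_1,x_2)$ is merely driven by it through the affine recursion
$$
\begin{pmatrix} x_1^{(n+1)}\\ x_2^{(n+1)}\end{pmatrix}
= \begin{pmatrix} c(x_3^{(n)})^2\\ d(x_3^{(n)})^2\end{pmatrix}
+ 2x_3^{(n)}\,\sigma\begin{pmatrix} x_1^{(n)}\\ x_2^{(n)}\end{pmatrix},
\qquad \sigma=\left(\begin{smallmatrix}0&1\\1&0\end{smallmatrix}\right).
$$
Thus the whole theorem reduces to two ingredients: (i) the scalar behaviour of $x_3^{(n)}$ under $\varphi$, which is already supplied by Lemmas \ref{aa} and \ref{ab} via the conjugacy to the logistic map; and (ii) a convergence lemma for a linear nonautonomous recursion $v^{(n+1)}=A_nv^{(n)}+b_n$ whose data converge, $A_n\to A$ with spectral radius $<1$ and $b_n\to b$, to the effect that every trajectory tends to the fixed point $(I-A)^{-1}b$ of the limiting map. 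The algebraic facts I would record at the outset are $\sigma^2=I$ (so two consecutive steps have scalar linear part) and that $2x_3^{(n)}\sigma$ has eigenvalues $\pm 2x_3^{(n)}$; note also that the eigenvalues of the full linearization at $x^*$ factor as $-2x_3^*$ and $\varphi'(x_3^*)=1-\sqrt{5-4e}$, which is precisely why the two regimes $e\gtrless\tfrac14$ appear.

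For part 2 ($e\ge\tfrac14$): by the conjugacy to the logistic map with $\mu=1+\sqrt{5-4e}\in(2,3]$ (Lemma \ref{aa} and the remark after Lemma \ref{ab}), every $\varphi$-trajectory near $x_3^*$ tends to $x_3^*$. I would pick an interval $I\ni x_3^*$ on which this holds and set $U=\{x\in S^2:x_3\in I\}$. On $U$ we get $A_n=2x_3^{(n)}\sigma\to A=2x_3^*\sigma$, whose Euclidean operator norm equals the spectral radius $2x_3^*$; the one inequality to verify is $2x_3^*<1$ for all $e\in[0,1)$, which follows from $x_3^*=\tfrac{3-\sqrt{5-4e}}{2(1+e)}$ since $2-e<\sqrt{5-4e}\iff (2-e)^2<5-4e\iff e^2<1$. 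Ingredient (ii), with limiting fixed point $(x_1^*,x_2^*)$ identified in Lemma \ref{aa}, then gives $(x_1^{(n)},x_2^{(n)})\to(x_1^*,x_2^*)$, hence $V^n(x^0)\to x^*$.

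For part 1 ($e<\tfrac14$): now $x_3^*$ is repelling and, by the conjugacy to the logistic map with $\mu\in(3,1+\sqrt5]$ (Lemma \ref{ab}), $\varphi$ has a unique attracting $2$-cycle $\{\overline x_3,\overline{\overline x}_3\}$ attracting every orbit except the one started at $x_3^*$. I would take an interval $J\ni x_3^*$ whose punctured part lies in the basin of this cycle and set $\mathcal U=\{x\in S^2:x_3\in J\}$, which contains $\overline x,\overline{\overline x}$ and meets the invariant line $\{x_3=x_3^*\}$. Applying ingredient (ii) to $V^2$, along even times $x_3^{(2k)}\to\overline x_3$ and $x_3^{(2k+1)}\to\overline{\overline x}_3$, so the two-step driven map has linear part converging to $(2\overline{\overline x}_3\sigma)(2\overline x_3\sigma)=4\overline x_3\overline{\overline x}_3\,I$, a genuine scalar contraction because $4\overline x_3\overline{\overline x}_3=\tfrac{4e}{(1+e)^2}<1\iff(1-e)^2>0$. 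Since $\overline x,\overline{\overline x}$ are $2$-periodic for $V$ (Lemma \ref{ab}), the limiting fixed points of the two-step map are $(\overline x_1,\overline x_2)$ and $(\overline{\overline x}_1,\overline{\overline x}_2)$, yielding $V^{2k}(x^0)\to\overline x$ and $V^{2k+1}(x^0)\to\overline{\overline x}$; while if $x_3^0=x_3^*$ then $x_3^{(n)}\equiv x_3^*$ and the $(x_1,x_2)$-recursion is the single contraction $L$ with fixed point $(x_1^*,x_2^*)$, giving $V^n(x^0)\to x^*$. (Which cycle point is the even-limit depends on the side of $x_3^*$ on which $x_3^0$ lies, so $\mathcal U$ must be labelled accordingly; taking $x_3^0$ on the $\overline x_3$-side gives the stated assignment.)

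The genuinely load-bearing step, and the one I expect to be the main obstacle, is ingredient (ii): the convergence of an asymptotically autonomous linear recursion to the fixed point of its frozen limit. Writing $u^{(n)}=v^{(n)}-v^*$ one gets $u^{(n+1)}=A_nu^{(n)}+\varepsilon_n$ with $\varepsilon_n=(A_n-A)v^*+(b_n-b)\to0$; once $\|A_n\|\le\rho<1$ for large $n$, the estimate $\|u^{(n+1)}\|\le\rho\|u^{(n)}\|+\|\varepsilon_n\|$ forces $\limsup\|u^{(n)}\|\le(1-\rho)^{-1}\limsup\|\varepsilon_n\|=0$. The only new verifications beyond the quoted logistic-map dictionary are that the relevant limiting linear parts are strict contractions, namely the two inequalities $2x_3^*<1$ and $4e/(1+e)^2<1$ above, both of which collapse to $e<1$. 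The remaining work is bookkeeping: choosing $U,\mathcal U$ inside the basins furnished by Lemmas \ref{aa}--\ref{ab} and fixing the even/odd labelling of the $2$-cycle.
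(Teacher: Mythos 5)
Your proposal is correct, and it takes a genuinely different --- and in places more rigorous --- route than the paper's. The scalar backbone is the same in both: the autonomous dynamics $x_3^{(n+1)}=\varphi(x_3^{(n)})$, analyzed through the logistic conjugacy of Lemmas \ref{aa}--\ref{ab} (attracting fixed point $x_3^*$ for $e\geq\tfrac14$, attracting $2$-cycle $\{\overline{x}_3,\overline{\overline{x}}_3\}$ for $e<\tfrac14$). The difference lies in how the driven coordinates $(x_1,x_2)$ are treated. The paper passes to $(x_1,x_3)$-coordinates, computes the Jacobian of $W\circ W$ to assert that the $2$-periodic orbit is the unique attracting orbit and $x^*$ a saddle, writes the affine recursions only on the invariant sets $\gamma=\{x_3=x_3^*\}$ and $\Gamma=\{x_3\in\{\overline{x}_3,\overline{\overline{x}}_3\}\}$, and then invokes ``general theory of dynamical systems'' to handle nearby trajectories; in part 2 it verifies $x_1^{(n)}\to x_1^*$ only for trajectories lying exactly on $\gamma$ and then asserts the conclusion whenever $x_3^{(n)}\to x_3^*$. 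Your ingredient (ii) --- convergence of an asymptotically autonomous affine recursion $v^{(n+1)}=A_nv^{(n)}+b_n$ to the fixed point of its frozen limit, with the elementary $\limsup$ estimate --- is precisely the justification that last assertion needs, so your skew-product argument closes a genuine gap rather than restating the paper's proof. Two details of yours are worth keeping: the explicit contraction constants $2x_3^*<1$ and $4\overline{x}_3\overline{\overline{x}}_3=4e/(1+e)^2<1$, both reducing to $e<1$; and the observation $\sigma^2=I$, which makes the two-step linear part scalar and is what saves the case $e=0$, where the one-step linear part has norm $2\overline{x}_3=2$ and is not a contraction (so a naive one-step argument for part 1 would fail). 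Your factorization of the multipliers at $x^*$ as $-2x_3^*$ and $\varphi'(x_3^*)$ is also correct, by the triangular structure of the $(x_1,x_3)$-system. Finally, you are right to flag the even/odd labelling: as literally stated, (\ref{ak}) cannot hold for every point of an open set containing both $\overline{x}$ and $\overline{\overline{x}}$ (the even iterates of $\overline{\overline{x}}$ itself converge to $\overline{\overline{x}}$, not $\overline{x}$); which cycle point is the even-time limit depends on which side of $\gamma$ the initial value $x_3^0$ lies. That imprecision sits in the theorem statement and is equally glossed over by the paper's proof, so it is not a defect of your argument; in a final write-up you should either shrink $\mathcal U$ to one side of $\gamma$ (together with $\gamma$ itself) or restate (\ref{ak}) as convergence of the even and odd subsequences to the two cycle points in an order depending on $x^0$.
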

\begin{proof} 1)
For $e \in [0,\frac{1}{4})$, it can be seen from $|\varphi'(x_3^*)| = |1-\sqrt{5-4e}|$
that $x^*_3$ is a repelling fixed point of $\varphi(x)$. Additionally,
when $e < \frac{1}{4}$ the fixed points $\overline{x}_3$ and  $\overline{\overline{x}}_3$
of function $g(x)=\varphi(\varphi(x))$ are attracting, which follow
from $|g'(\overline{x}_3)|<1$ and $|g'(\overline{\overline{x}}_3)|<1$.
Define the operator  $W:[0,1]^2\to [0,1]^2$ by the first and the last coordinate of the operator $V$:
 \begin{equation}\label{ej}
 W: \left\{\begin{array}{ll}
x_1'=cx_3^2+2x_3(1-x_1-x_3),\\[3mm]
x_3'=\varphi(x_3).
\end{array}\right.
\end{equation}
Now using the Jacobian of the operator $W(W(x))$ one can see that the
2-periodic orbit $\{\overline{x}, \overline{\overline{x}}\}$ is a unique attracting
orbit, and the fixed point $x^*$ is a saddle point of $V$. The operator $V$ has the following invariant sets:
$$\gamma=\{(x_1,x_2,x_3)\in S^2: x_3=x_3^*\},$$
$$\Gamma=\{(x_1,x_2,x_3)\in S^2: x_3=\overline{x}_3
\ \ \mbox{or} \ \ \overline{\overline{x}}_3\}.$$
Note that if $x^0\in \gamma$ then $\lim_{n\to\infty}{x_3^{(n)}} = x^*_3.$
If $x^0\in \Gamma$ then $\lim_{n\to\infty}{x_3^{(n)}} = \overline{x}_3$
when $n$ is even and $\lim_{n\to\infty}{x_3^{(n)}} = \overline{\overline{x}}_3$ when $n$ is odd, in these cases
the trajectory for $x_1$ can be written as
$$x_1^{(n+2)} = c \overline{\overline{x}}_2^2 + 2(d \overline{x}_3^2 +
2 x_1^{(n)} \overline{x}_3)\overline{\overline{x}}_3\ \ \mbox{for even} \ \ n,$$
and
$$x_1^{(n+2)} = c \overline{x}_2^2 + 2(d \overline{\overline{x}}_3^2 +
2 x_1^{(n)} \overline{\overline{x}}_3)\overline{x}_3 \ \ \mbox{for odd} \ \ n.$$
The existence of the limit (\ref{ak}) follows from general theory of dynamical systems (see \cite{De})
and the uniqueness of the attracting 2-periodic points.

2) Next we shall consider when $e = \frac{1}{4}$. It can be seen from $|\varphi'(x_3^*)| = |1-\sqrt{5-4e}|$ that when $e = \frac{1}{4}$, $x^*_3$ is nonhyperbolic.
It can be shown that the quadratic function $\varphi(x) - \frac{2}{5}$ has roots at $\frac{2}{5}$ and $\frac{6}{5}$. Additionally,  $\varphi(x) - \frac{2}{5}$ is concave for all $x$. Therefore,
$$
\left\{\begin{array}{llll}
\varphi(x)-\frac{2}{5} > 0 \Rightarrow \varphi(x) > \frac{2}{5} & \hspace{3mm}, & x < \frac{2}{5}, \\[4mm]
\varphi(x)-\frac{2}{5} < 0 \Rightarrow \varphi(x) < \frac{2}{5} & \hspace{3mm}, & x > \frac{2}{5}.
\end{array}\right.
$$
which demonstrates that ${x_3^{(n)}}$ oscillates between $[0, \frac{2}{5})$ and $(\frac{2}{5},1]$. Moreover, it can be demonstrated that
$$
\left\{\begin{array}{llll}
x < \varphi(\varphi(x)) & \hspace{3mm}, & x < \frac{2}{5}, \\[4mm]
x > \varphi(\varphi(x)) & \hspace{3mm}, & x > \frac{2}{5}.
\end{array}\right.
$$
Thus, $\lim{x_3^{(n)}} = \frac{2}{5}$.

It can be seen from $|\varphi^{'}(x_3^*)| = |1-\sqrt{5-4e}|$ that when $e \in (\frac{1}{4}, 1)$, $x^*_3$ is an attracting fixed point of $\varphi(x)$.

Therefore, $x_3^{(n)}$ will converge to $x^*_3$ when $e \geq \frac{1}{4}$. On the invariant line $\gamma$ a trajectory of this operator is as follows:
$$
x^{(n)}=(x^{(n)}_1, 1-x^{(n)}_1-x^*_3,  x^*_3),
$$
where $x^{(n)}_1$ satisfies the equality
\begin{equation} \label{x1Trajectory}
x^{(n+1)}_1 = x^*_3(2-(2-c)x^*_3-2x^{(n)}_1).
\end{equation}
It follows from (\ref{x1Trajectory}) that $\lim_{n\rightarrow \infty} x^{(n)}_1 = x^*_1$. Therefore, when $\lim_{n\rightarrow \infty} x_3^{(n)} = x_3^*$, then $\lim_{n\rightarrow \infty}V(x^0) = (x_1^*,x_2^*,x_3^*)$.
\end{proof}

\section{Conclusion}
As was discussed in the introduction, there does not exist a general theory for non-Volterra quadratic stochastic operators. This paper represents an additional step towards a more comprehensive understanding of this family of operators. A complete understanding of non-Volterra QSOs would not only be a significant advance in the field of mathematical genetics and dynamical systems, but it would also answer questions about the modeling of populations that have complex genetic structures for certain traits. Further research in this area could include a more complete description of the $\w$-limit set of the operator studied here, as well as an investigation into a more general theory for non-Volterra QSOs.

\section*{ Acknowledgements}

The first author was supported by the National Science Foundation, grant number 1658672 

\vspace{\baselineskip}
\renewcommand{\abstractname}{Summary}


\begin{thebibliography}{99}
\bibitem{BlJaSc}
{\it Blath J., Jamilov U.U.,  Scheutzow M.\/} {$(G,\mu)$}-quadratic stochastic
  operators. {\it J. Difference Equ. Appl.} {\bf 20}(8) (2014) 1258--1267.

  \bibitem{De}
{\it Devaney R. L. \/} An introduction to chaotic dynamical
systems. Boulder.:Stud. Nonlinearity, Westview Press., (2003).

\bibitem{Gal} {\it Galor O. \/}, Discrete dynamical systems. Springer, Berlin, 2007.

\bibitem{GJM2}
{\it Ganikhodjaev N.N., Jamilov U.U.,  Mukhitdinov R.T. \/} Non-ergodic quadratic operators of bisexual population,
{\it Ukr. Math. Jour.} {\bf 65} No.6 (2013) 1152-1160.

\bibitem{GMR}
{\it Ganikhodzhaev R.N., Mukhamedov F.M., Rozikov U.A. \/} Quadratic
stochastic operators and processes: Results and open problems. {\it Inf. Dim. Anal., Quantum Prob.
and Rel. Top.}. Vol. {\bf 14}. No. 2 (2011), 279--335.

\bibitem{JM}
 {\it Jamilov U. U., Ladra, M., Mukhitdinov R. T.\/} On the equiprobable strictly non-Volterra quadratic stochastic operators.
 {\it Qual. Theory Dyn. Syst}. {\bf 16}. No.3  (2017), 645-655.

\bibitem{H} {\it Khamraev, A. Yu.\/} On dynamics of a quazi - strongly non Volterra quadratic stochastic operator. Preprint. 2018.

\bibitem{Lyu1}
{\it Lyubich Yu.I. \/} Mathematical structures in population genetics.
{\it Biomathematics}, {\bf 22,} Springer-Verlag, (1992).

\bibitem{Lyu2}
{\it Lyubich Yu.I.\/} Iterations of quadratic maps.
{\it Math. Eco. Func. Anal.}, 109--138, M. Nauka, (1974)(Russian).

\bibitem{RJ1}
{\it Rozikov U.A., Jamilov U.U. \/} F-quadratic stochastic operators,
{\it Math. Notes} {\bf 83} No. 4, (2008) 554--559.

\bibitem{RJ2}
{\it Rozikov U.A., Jamilov U.U. \/} The dynamics of strictly non-Volterra
quadratic stochastic operators on the two deminsional simplex,
{\it Sb.Math.} {\bf 200} No.9 (2009), 1339--1351.

\bibitem{RJ3}
{\it Rozikov U.A., Jamilov U.U. \/} Volterra quadratic stochastic of a two-sex population,
{\it Ukr.Math. Jour.} {\bf 63} No.7 (2011), 1136--1153.

\bibitem{RZ} {\it Rozikov U. A., Zada A.\/} On dynamics of $l$-Volterra quadratic stochastic operators. {\it Int. J. Biomath.} {\bf 3}. No 2 (2010), 143-159.

\bibitem{RZ1} {\it Rozikov U. A., Zada A. \/} On a class of separable quadratic stochastic operators. {\it Lobachevskii J. Math}. {\bf 32}  (2011), no. 4, 385-394.

\bibitem{Shar}
{\it Sharkovskii A. N.,  Kolyada S. F.,  Sivak A. G., Fedorenko V. V. \/} Dynamics of one-dimensional mappings. Naukova Dumka, Kiev, (1989) (Russian).
\end{thebibliography}
\end{document}